\colorlet{shadecolor}{blue!15}
\newtheorem{theorem}{Theorem}%[section]
\newtheorem{proposition}{Proposition}
\newtheorem{lemma}{Lemma}
\newcommand {\tr}{\mathrm{tr}}
\newcommand {\R}{\mathbb{R}}
\newcommand{\PP}{\mathbb{P}}
\newcommand{\E}{\mathbb{E}}%\boldsymbol{E}
\renewcommand {\H}{\mathbb{H}}
\newcommand {\T}{\mathcal{T}}
\renewcommand {\P}{\mathcal{P}}
\newcommand {\BB}{\mathscr{B}}
\newcommand {\CC}{\mathscr{C}}
\newcommand {\A}{\mathscr{A}}
\def\namedlabel#1#2{\begingroup
  #2%
  \def\@currentlabel{#2}%
  \phantomsection\label{#1}\endgroup
}
 \title{\Large{A Karhunen–Loève Theorem for Random Flows in Hilbert spaces}}
\author{
  Leonardo V.\ Santoro
  \qquad
  Kartik G. Waghmare
  \qquad
  Victor M. Panaretos   \\ {\footnotesize{\texttt{leonardo.santoro@epfl.ch} \qquad\,\,\texttt{kartik.waghmare@epfl.ch} \qquad\,\,\texttt{victor.panaretos@epfl.ch}}}}
  \affil{Institut de Math\'ematiques\\École Polytechnique Fédérale de Lausanne}
\date{\today}
\begin{document}

\maketitle
\begin{abstract}
    We develop a generalisation of Mercer's theorem to operator-valued kernels in infinite dimensional Hilbert spaces. We then apply our result to deduce a Karhunen-Loève theorem, valid for mean-square continuous Hilbertian functional data, i.e.\ flows in Hilbert spaces. That is, we prove a series expansion with uncorrelated coefficients for square-integrable random flows in a Hilbert space, that holds uniformly over time.

    \bigskip
      \noindent \textbf{MSC2020 classes:} 60G12, 62R10\\ %62H25,  46C99, 15A18 \\
    \textbf{Key words:} Mercer's theorem, Functional Principal Components, Hilbertian functional data, random series expansion

\end{abstract}

\setcounter{section}{0}

\section{Introduction}

The Karhunen-Lo\`eve theorem is a fundamental result on stochastic processes, playing a central role in their probabilistic construction, numerical analysis and statistical inference \citep{adler2010geometry,le2010spectral,hsing2015theoretical}. In its simplest form, it provides a countable decomposition of a time-indexed real-valued random function into a ``bi-orthogonal'' Fourier series that separates its stochastic from its functional components: the basis functions are deterministic orthogonal functions, and their coefficients are uncorrelated random variables. When the basis functions are ordered by decreasing coefficient variance, the $k$-truncated expansion provides the best $k$-dimensional approximation of the process in mean square. Importantly, for random functions that are mean-square continuous, the series can be interpreted \emph{pointwise}. These properties explain the catalytic role the expansion has played (and continues to play) in
% the development of 
the field of Functional Data Analysis \citep{wang2016functional} -- in fact, the field's very origin traces to Grenander's use of the expansion as a coordinate system for inference on random functions \citep{grenander1950stochastic}. 

Traditionally, functional data analysis focused on real-valued functions defined on a compact interval. Increasingly, though, the complexity of functional data escapes this context. Modern functional data can come in the form of functional flows (random maps from an interval into a function space) or more generally functional random fields (random maps from a Euclidean set into a function space). With the aim of making such data amenable to the tools of functional data analysis, we establish a generalisation of the Karhunen-Lo\`eve expansion to random flows (and more generally random fields) valued in an abstract separable Hilbert space, possibly infinite dimensional (Theorem \ref{kl-ours}). To do so, we first establish a version of Mercer's theorem for non-negative definite kernels valued in separable Hilbert spaces (Theorem \ref{thm : mercers, ours}).

\section{Background and Problem Statement}

Let $\H$ be a separable Hilbert space with inner product $\langle\cdot,\cdot\rangle_{\H} \;:\; \H\times\H\rightarrow\R$ and induced norm $\|\cdot \|_{\H}\;:\; \H \rightarrow \R_{+}$, with $\mathrm{dim}(\H)\in \mathbb{N}\cup\{\infty\}.$
We denote by $\BB(\H)$, $\BB_1(\H)$ and $\BB_2(\H)$ the space of
bounded, trace-class (TC) and Hilbert-Schmidt (HS) linear operators on $\H$, respectively, with corresponding norms:
 $$
 \|B\|_{\BB(\H)}:=\sup_{h\in\H, \|h\|=1}\|B h\|,\quad 
\|B\|_{\BB_{1}(\H)}:= \tr(\sqrt{B^*B}), \quad 
 \|B\|_{\BB_{2}(\H)}:= \sqrt{\tr(B^*B)},
 $$
 where the adjoint $B^*$ of a linear operator $B\in \mathscr{B}(\H)$ is defined via $\langle Bu,v\rangle_{\H} = \langle u,B^*v\rangle_{\H}$ for all $u,v \in \H$. Note that
 $
  \|B\|_{\BB(\H)} \leq
 \|B\|_{\BB_2(\H)}\leq
 \|B\|_{\BB_1
 (\H)}.
 $
 For $u,v \in \H$, the operator $u\otimes_{\H}v \;:\; \H\rightarrow \H$ defined by $ ( u\otimes_{\H}v ) h = \langle u,h\rangle_{\H} v$ is bounded and linear. 

We say that a bounded operator $B$ is compact if for any bounded sequence $\{h_n\}_{n\geq 1}$ in $\H$, $\{B h_n\}_{n\geq 1}$ contains a convergent subsequence. Let $B\in \BB(\H)$ be compact and self adjoint. Denote by $\{e_j\}_{j\geq 1}$ its eigenvectors, with corresponding eigenvalues $\{\lambda_j\}_{j\geq 1}$, ordered so that $|\lambda_1|\geq|\lambda_2|\geq\dots$. Then
$\{e_j\}_{j\geq 1}$ comprises a Complete Orthonormal System (CONS) for $\overline{\mathrm{Im}(B)}$ and we may write
$
B = \sum_{j\geq 1} \lambda_j e_j\otimes_{\H} e_j.
$

\bigskip

Let $\T$ be any compact subset of a Euclidean space. We denote by $L^2(\T, \H)$ the Hilbert space of square integrable $\H$-valued functions $f:\T \rightarrow \H$, i.e., $$L^{2}(\T,\H)=\left\{f: \int_{\T}\|f(t)\|^{2}_{\H} d t<\infty\right\}.$$ 
The associated inner product $\langle\cdot, \cdot\rangle_{L^{2}(\T,\H)}$ is defined by $\langle f, g\rangle_{L^{2}(\T,\H)}=\int_{\T}\langle f(t), g(t)\rangle_{\H} d t$ with corresponding norm $\|\cdot\|_{L^{2}(\T,\H)}$ by $\|f\|_{L^{2}(\T,\H)}^{2}=\langle f, f\rangle_{L^{2}(\T,\H)}$.

Let $\chi\in {L^{2}(\T,\H)}$ be a (mean zero) random flow in $\H$ with finite second moment, $\E \| \chi\|_{L^2(\T,\H)}^2<\infty$. We will refer to $\chi$ as \textit{Hilbertian flow}, as in \cite{kim2020principal}, though it could also be a termed a \emph{Hilbertian field} when $\mathrm{dim}(\mathcal{T})\geq 2$. Denote by $\CC\in \BB(L^{2}(\T,\H))$ its covariance operator:
\begin{equation}\label{eq : log-process covariance}
\CC\;:\;L^{2}(\T,\H) \rightarrow L^{2}(\T,\H),
\qquad \quad
\CC  = \E \left[ 
\chi \otimes_{L^{2}(\T,\H)}
\chi
\right]
\end{equation}
or, equivalently:
$$
\langle\CC U, V\rangle_{L^{2}(\T,\H)}:=
\E\left[
\langle\chi, U\rangle_{L^{2}(\T,\H)}
\langle \chi, V\rangle_{L^{2}(\T,\H)}
\right], \quad \text { for } U, V \in L^{2}(\T,\H)    
$$
Note that $\CC$ is nonnegative-definite and trace-class. In particular, $\CC$ is compact and self-adjoint; therefore, it admits the following spectral decomposition in terms of its eigenvalue-eigenfuction pairs (e.g. \citet[][Theorem 7.2.6]{hsing2015theoretical}): 
$$
\CC=\sum_{k=1}^{\infty} \lambda_{k} {\Phi}_{k} \otimes_{L^{2}(\T,\H)} {\Phi}_{k},
$$
where $\lambda_{1}>\lambda_{2}>\cdots>0$ are the eigenvalues and ${\Phi}_{k}\:  : L^{2}(\T,\H)  \: \rightarrow  L^{2}(\T,\H)$ the corresponding eigenfunctions for $\CC$, forming a complete orthonormal system.

\smallskip
\noindent Consequently, $\chi$ admits the following decomposition
\begin{equation}\label{L2expansion}
\chi =\sum_{k=1}^{\infty} \langle \chi, {\Phi}_{k}\rangle_{L^{2}(\T,\H)} \boldsymbol{\Phi}_{k}
\end{equation}
where the convergence is understood in the mean $L^2(\T,\H)$ norm sense
\begin{equation} \label{eq : integral KL}
\lim_{N\rightarrow \infty} 
\E \left\| \chi - \sum_{k=1}^{N} \langle \chi , {\Phi}_{k}\rangle_{L^{2}(\T,\H)} {\Phi}_{k}\right\|_{L^2(\T,\H)}^2 = 0 
\end{equation}
and where it may furthermore be shown that $\{ \langle \log _{\Xi}, {\Phi}_{k}\rangle_{L^{2}(\T,\H)}\}_{k\geq 1}$ are uncorrelated random variables with zero mean.
In particular, by \citet[][Theorem 7.2.8]{hsing2015theoretical}, the above decomposition is optimal: for any $N<\infty$ and any CONS $\{\Psi_k\}_{k\geq 1}$ of $L^{2}(\T,\H)$, 
$$\E \left\| \chi - \sum_{k=1}^{N} \langle \chi , {\Phi}_{k}\rangle_{L^{2}(\T,\H)} {\Phi}_{k}\right\|_{L^2(\T,\H)}^2 \quad\leq \quad\E \left\| \chi - \sum_{k=1}^{N} \langle \chi , {\Psi}_{k}\rangle_{L^{2}(\T,\H)} {\Psi}_{k}\right\|_{L^2(\T,\H)}^2.$$

\bigskip
\noindent We ask the following questions:
\begin{center}
    \emph{Is expansion \eqref{L2expansion} interpretable pointwise in $t\in\T$? }\\
    \emph{Is the convergence \eqref{eq : integral KL} valid uniformly over $t\in\T$?}
\end{center}
When $\H = \R$, and assuming mean-square continuity of $\chi$, these two questions have been long known to admit a positive answer in the form of the celebrated Karhunen-Lo\`eve theorem (\cite{karhunen1946spektraltheorie}, \cite{loeve1948functions}; see also \cite{kac1947explicit}), whose proof fundamentally relies on Mercer's theorem on the decomposition of real-valued kernels \citep{mercer1909xvi}.  Extensions to random fields valued in $d$-dimensional Euclidean space $\H = \R^d$ have also been tackled \citep{withers1974mercer}, but the general, infinite-dimensional case has not been addressed, and does \emph{not} straightforwardly follow from the case $\H = \R^d$.

\medskip

\section{Mercer's Theorem for Operator-Valued Kernels}
Consider a function $K\;:\; \T\times \T \rightarrow \BB_1(\H)$. We refer to such a function $K$ as an \textit{operator-valued kernel}.
We say that $K$ is continuous if:
$$
\| K(s,t) - K(s',t') \|_{\BB_1(\H)} \rightarrow 0, \quad \text{as } t'\rightarrow t, s'\rightarrow s,
$$
for every $s,t\in\T$.
We say that $K$ is symmetric if $K(s,t) = K(t,s)^*$. Finally, we say that $K$ is non-negative definite if for every $n\geq 1$ and sequences $\{v_j\}_{j=1,\dots,n}\subset \T$, $\{h_j\}_{j=1,\dots,n} \subset \H$:
\begin{equation}\label{eq : positive kernel}
\sum_{i,j=1}^n \langle K(v_i,v_j) h_i, h_j\rangle_{\H} \geq 0.
\end{equation}

Given an operator-valued kernel, we may define an \textit{integral operator} $\A_K\;:\;L^2(\T,\H)\rightarrow L^2(\T,\H)$ by:
\begin{equation}\label{eq : integral operator}
    \A_K(f)(t) = \int_{\T} K(t,u)f(u) du
\end{equation}
where the integral in \eqref{eq : integral operator} is to be understood as a Bochner integral.

\begin{lemma}\label{lemma : basic kernel properties}
Let $K$ be a continuous kernel.
\begin{itemize}
    \item[(i)] $\A_K$ is compact
    \item[(ii)] if $K$ is symmetric, then $\A_K$ is self-adjoint.
    \item[(iii)] $K$ is non-negative definite if and only if $\A_K$ is non-negative definite.
\end{itemize}
\end{lemma}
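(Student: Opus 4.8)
The plan is to treat the three parts separately. The compactness assertion (i) is the workhorse; (ii) is a direct computation; and the two implications in (iii) are obtained by relating the integral quadratic form $f\mapsto\langle\A_K f,f\rangle_{L^2(\T,\H)}$ to the finite sums in \eqref{eq : positive kernel}, which is where the real work lies.

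For (i) I would show that $\A_K$ is Hilbert--Schmidt on $L^2(\T,\H)$, which forces compactness. Taking a CONS $\{\phi_m\}_{m\ge 1}$ of $L^2(\T)$ and a CONS $\{e_n\}_{n\ge 1}$ of $\H$, the functions $t\mapsto\phi_m(t)e_n$ form a CONS of $L^2(\T,\H)$; expanding $\sum_{m,n}\|\A_K(\phi_m\,e_n)\|_{L^2(\T,\H)}^2$ and applying Parseval, for each fixed $t$, to the $\BB_2(\H)$-valued map $u\mapsto K(t,u)$, one gets $\|\A_K\|_{\BB_2(L^2(\T,\H))}^2=\int_\T\int_\T\|K(t,u)\|_{\BB_2(\H)}^2\,du\,dt$. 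Since $\|K(t,u)\|_{\BB_2(\H)}\le\|K(t,u)\|_{\BB_1(\H)}$ and $(t,u)\mapsto\|K(t,u)\|_{\BB_1(\H)}$ is continuous, hence bounded, on the compact set $\T\times\T$, this double integral is finite, so $\A_K\in\BB_2(L^2(\T,\H))$ and is therefore compact. (An equivalent, more hands-on route: uniform continuity of $K$ lets one approximate it in $\sup$-$\BB_1(\H)$-norm by kernels that are constant on the cells of a fine partition of $\T$; the corresponding integral operator is a finite sum of operators of the form $(\text{bounded})\circ(\text{trace-class, hence compact})\circ(\text{bounded})$, so compact, and $\A_K$ is its operator-norm limit.)

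For (ii) write $\langle\A_K f,g\rangle_{L^2(\T,\H)}=\int_\T\int_\T\langle K(t,u)f(u),g(t)\rangle_\H\,du\,dt$, where the double integral converges absolutely because $\sup_{t,u}\|K(t,u)\|_{\BB(\H)}<\infty$; by Fubini and the symmetry relation $K(t,u)=K(u,t)^*$, the integrand becomes $\langle f(u),K(u,t)g(t)\rangle_\H$, and exchanging the order of integration produces $\langle f,\A_K g\rangle_{L^2(\T,\H)}$, i.e.\ $\A_K^*=\A_K$. For the implication ``$K$ non-negative definite $\Rightarrow\A_K$ non-negative definite'' in (iii), note that $\langle\A_K f,f\rangle_{L^2(\T,\H)}=\int_\T\int_\T\langle K(t,u)f(u),f(t)\rangle_\H\,du\,dt$; partition $\T$ into small cells $\T_1,\dots,\T_m$ with chosen points $v_k\in\T_k$, and for a simple $f=\sum_k\mathbbm{1}_{\T_k}c_k$ one obtains $\langle\A_K f,f\rangle=\sum_{k,l}\big\langle\big(\int_{\T_k}\int_{\T_l}K(t,u)\,du\,dt\big)c_l,\,c_k\big\rangle_\H$. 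Uniform continuity of $K$ makes each block integral within $o(1)$ of $|\T_k|\,|\T_l|\,K(v_k,v_l)$, so this sum differs by $o(1)$ from a finite quadratic form of the type in \eqref{eq : positive kernel} (evaluated at the points $v_k$ and the vectors $|\T_k|c_k$), hence is $\ge-o(1)$; refining the mesh gives $\langle\A_K f,f\rangle\ge 0$, and density of simple functions in $L^2(\T,\H)$ together with boundedness of $\A_K$ extends this to every $f$.

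For the converse implication in (iii), given $n$, points $v_1,\dots,v_n$ (which may be taken distinct) and vectors $h_1,\dots,h_n$, I would test $\A_K$ against mollified point masses $f_\varepsilon=\sum_j|B_\varepsilon^j|^{-1}\,\mathbbm{1}_{B_\varepsilon^j}\,h_j$, with $B_\varepsilon^j=B(v_j,\varepsilon)\cap\T$ disjoint for small $\varepsilon$: $\langle\A_K f_\varepsilon,f_\varepsilon\rangle\ge 0$, and letting $\varepsilon\to 0$, continuity of $K$ identifies the limit with $\sum_{i,j}\langle K(v_i,v_j)h_i,h_j\rangle_\H$, which is therefore non-negative. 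The main obstacle is precisely part (iii): one must make the Riemann-sum and mollification approximations quantitative (this is exactly where continuity, and via compactness of $\T$ uniform continuity, of $K$ is decisive), keep careful track of which argument of $K$ acts on which vector so that the discrete forms that appear are genuinely of the shape constrained by \eqref{eq : positive kernel} — it is convenient here to note that $\langle\A_K f,f\rangle$ depends only on the symmetric part of $K$, for which this bookkeeping is unambiguous — and handle the passage from simple (or continuous) test functions to arbitrary $f\in L^2(\T,\H)$. Parts (i) and (ii), by comparison, are essentially routine.
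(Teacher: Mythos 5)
Your proposal is correct, and parts (ii) and (iii) follow essentially the same route as the paper: (ii) is the same Fubini computation, and (iii) uses the same two devices (piecewise-constant Riemann approximation of the quadratic form for the forward direction, mollified point masses $|B_\varepsilon^j|^{-1}\mathbbm{1}_{B_\varepsilon^j}h_j$ for the converse), differing only in that you test against simple functions and extend by density where the paper plugs in $h_i=\int_{E_{ni}}f$ directly for arbitrary $f$. The genuine difference is in (i). The paper truncates the kernel by finite-dimensional projections, $K_n=\P_nK\P_n$, and shows $\A_{K_n}\to\A_K$ in operator norm by exploiting that each $K(s,t)$ is Hilbert--Schmidt on $\H$ (so that $\P_nK(s,t)\P_n\to K(s,t)$ in operator norm, uniformly via continuity and compactness of $\T\times\T$), whereas you show the stronger statement that $\A_K$ itself is Hilbert--Schmidt on $L^2(\T,\H)$, via the tensor-product CONS $\{\phi_m e_n\}$ and the identity $\|\A_K\|^2_{\BB_2(L^2(\T,\H))}=\int_\T\int_\T\|K(t,u)\|^2_{\BB_2(\H)}\,du\,dt$, finite because $\|K(t,u)\|_{\BB_2(\H)}\le\|K(t,u)\|_{\BB_1(\H)}$ is bounded on the compact square. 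Your argument is shorter, yields the quantitatively stronger conclusion $\A_K\in\BB_2(L^2(\T,\H))$ (compactness is a corollary), and cleanly isolates exactly where the trace-class assumption on the values of $K$ enters; the paper's approximation argument is more elementary in that it never invokes the $L^2(\T)\otimes\H$ tensor structure, and its truncation technique reappears later in the paper. Both are valid; I find no gap in what you propose.
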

\begin{proof}
\textit{(i)} 
Let $\{e_i\}_{i\geq 1}$ be a CONS for $\H$. Then $\P_n := \sum_{i=1}^n e_i \otimes e_i$ converges strongly to the identity. For $n\geq 1$, let $K_n(\cdot,\cdot) := \P_n K(\cdot,\cdot) \P_n$. Note that $\A_{K_n}$ is compact, being of finite rank. To prove that $\A_K$ is compact it thus suffices to show that $\A_{K_n} \rightarrow \A_{K}$ strongly, as $n\rightarrow \infty$. Now:
\begin{align*}
    \| (\A_{K_n} - \A_K) f \|_{L^2(\T,\H)}
    \: &= \:   \left(\int_{\T} \|(K_n(t,u) - K(t,u))f(u)\|_{\H}^2 du \right)^2 \\
    \: &\leq \:  \sup_{t,s} \|(K_n(t,s) - K(t,s))\|_{\BB(\H)} \cdot \|f\|_{L^2(\T,\H)}.
\end{align*}
By continuity, to prove compactness of $\A_K$ it thus suffices to show that:
$$\|(K_n(t,t') - K(t,s))\|_{\BB(\H)}\rightarrow 0, \quad \text{for all } t,s\in T.
$$
Notice that $P_{n}K(s, t)P_{n} \to K(s, t)$ strongly. Because $K(s,t)$ is Hilbert-Schmidt, the convergence also holds in operator norm. Indeed for $g, h \in \H$ we can write
\begin{eqnarray*}
    &|\langle g, (K(s,t) - P_{n}K(s, t)P_{n})h \rangle_{\H}|
    &\leq \sum_{i, j > n} |\langle K(s, t)e_{i}, e_{j} \rangle_{\H}| \cdot |\langle h, e_{i} \rangle_{\H}| \cdot |\langle g, e_{j} \rangle_{\H}| \\
    &&\leq \left[ \sum_{i, j > n} |\langle K(s, t)e_{i}, e_{j} \rangle_{\H}|^{2} \right]^{1/2} \left[ \sum_{i, j > n} |\langle h, e_{i} \rangle_{\H}|^{2} |\langle g, e_{j} \rangle_{\H}|^{2} \right]^{1/2}  \\
    &&\leq \left[ \sum_{i, j > n} |\langle K(s, t)e_{i}, e_{j} \rangle_{\H}|^{2} \right]^{1/2} \|g\|_{\H}\|h\|_{\H}  \\
\end{eqnarray*}
and the conclusion follows.

\medskip

\textit{(ii)} If $K$ is symmetric, for any $f,g\in L^2(\T,\H)$:
\begin{align*}
    \langle \A_K f, g \rangle_{L^2(\T,\H)} 
    \: & = \: \int_{\T} \langle \int_{\T} K(t,u) f(u) du, g(t)\rangle_\H dt\\ 
    \: & = \: \int_{\T} \int_{\T} \langle  K(t,u) f(u), g(t)\rangle_\H dt du\\ 
    \: & = \: \int_{\T} \int_{\T} \langle  f(u), K(t,u)^*g(t)\rangle_\H dt du\\
    \: & = \: \int_{\T} \int_{\T} \langle  f(u), K(u,t)g(t)dt \rangle_\H du\\
    \: & = \: \langle  f, \A_K g \rangle_{L^2(\T,\H)} 
\end{align*}
thus proving that $\A_K$ is self-adjoint.

\medskip

\textit{(iii)} We follow \citet[Theorem 4.6.4][]{hsing2015theoretical}.
 Given $n>0$ let $\delta_n$ be chosen so that $\left|K\left(s_2, t_2\right)-K\left(s_1, t_1\right)\right|<n^{-1}$ whenever $d\left(\left(s_1, t_1\right),\left(s_2, t_2\right)\right)<\delta_n$. As $\T$ is a compact metric space, there exists a finite partition $\left\{E_{n i}\right\}$ of $\T$ such that each $E_{n i}$ has diameter less than $\delta_n$. Let $v_i$ be an arbitrary point of $E_{n i}$ and, for all $(s, t) \in E_{n i} \times E_{n j}$, define $K_n(s, t)$ to be $K\left(v_i, v_j\right)$. The (uniform) continuity of $K$ now has the consequence that
$$
\max _{(s, t) \in \T \times \T}\left|K(s, t)-K_n(s, t)\right|<n^{-1} .
$$
Now let $\A_{K_n}$ be the integral operator with kernel $K_n$. With this choice, we find that, for any $f \in L^2(\T,\H)$,
$$
\left|\langle\mathscr{K} f, f\rangle_{L^2(\T,\H)}-\left\langle\mathscr{K}_n f, f\right\rangle_{L^2(\T,\H)}\right| \leq n^{-1}\|f\|_{L^2(\T,\H)}^2
$$
and
$$
\left\langle\mathscr{K}_n f, f\right\rangle_{L^2(\T,\H)}
=\sum_{i,j=1}^n \left\langle K\left(v_i, v_j\right) \int_{E_{n i}} f(t) dt,  \int_{E_{n j}} f(t) dt \: \right\rangle_{\H}.
$$
which, by \eqref{eq : positive kernel}, proves the positiveness of $\A_K$.

Conversely, suppose that
$$
\sum_{i,j=1}^n \langle K(v_i,v_j) h_i, h_j\rangle_{\H} < 0.
$$
for some $\{v_j\}_{j=1,\dots,n}\subset \T$, $\{f_j\}_{j=1,\dots,n} \subset \H$.
As $K$ is uniformly continuous, there exist measurable disjoint sets $E_1, \ldots, E_m \subset \T$ with $\lvert E_i\rvert >0, v_i \in E_i$ for all $i$ such that:
$$
\max_{\tilde v_i \in E_i} \sum_{i,j=1}^n \langle K(\tilde v_i,\tilde v_j) h_i, h_j\rangle_{\H} < 0.
$$
This implies that:
$$
\sum_{i,j=1}^n \int_{E_i}\int_{E_j} \langle K(u,v) h_i, h_j\rangle_{\H} \,du\, dv < 0
$$
due to the mean-value theorem. Upon observing that the last expression is simply $\langle\A_K f, f\rangle_{\H}$ for $f(\cdot) = \sum_{i=1}^n  (\lvert E_i \rvert)^{-1} h_i \cdot \mathds{1}_{E_i} (\cdot)$, we conclude that $\A_K$ is also not non-negative definite.
\end{proof}

In particular, if $K$ is a continuous, symmetric and non-negative definite kernel,  $\A_K$ admits a spectral decomposition in terms of its eigenvalue-eigenfunction pairs. That is, the eigenfunctions for $\A_K$, $\{\Phi_j\}_{j\geq 1}$, form a CONS for $L^2(\T,\H)$, and if $\{\lambda_j\}_{j\geq 1}$ denote the corresponding eigenvalues, one may write:
\begin{equation}\label{eq : spectral decomp of integral operator}
\A_K = \sum_{j\geq 1} \lambda_j \Phi_j \otimes_{L^2(\T,\H)}\Phi_j.
\end{equation}
Furthermore, the following lemma establishes that the eigenfunctions of $\A_K$ are uniformly continuous.

\begin{lemma}\label{lemma : integral operators maps to continous functions}
Let $K$ be a continous kernel. For each $f \in L^2(\T,\H)$, $(\A_K f)(\cdot)$ is uniformly continuous.
\end{lemma}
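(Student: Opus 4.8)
The plan is to bound $\|(\A_K f)(t) - (\A_K f)(t')\|_{\H}$ directly, using the triangle inequality for Bochner integrals together with the uniform continuity of $K$ on the compact product $\T\times\T$. As a preliminary, I would note that the Bochner integral defining $(\A_K f)(t)$ is well defined for every fixed $t\in\T$: continuity of $K$ on the compact set $\T\times\T$ gives $M:=\sup_{s,u\in\T}\|K(s,u)\|_{\BB_1(\H)}<\infty$, the map $u\mapsto K(t,u)f(u)$ is strongly measurable (as $u\mapsto K(t,u)$ is continuous and $f$ strongly measurable), and $\int_\T\|K(t,u)f(u)\|_\H\,du\le M\int_\T\|f(u)\|_\H\,du<\infty$ since $\T$ has finite Lebesgue measure and $f\in L^2(\T,\H)$.

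Next, for $t,t'\in\T$ I would write $(\A_K f)(t)-(\A_K f)(t')=\int_\T\bigl(K(t,u)-K(t',u)\bigr)f(u)\,du$ and apply $\bigl\|\int_\T g\bigr\|_\H\le\int_\T\|g\|_\H$, followed by $\|(K(t,u)-K(t',u))f(u)\|_\H\le\|K(t,u)-K(t',u)\|_{\BB(\H)}\|f(u)\|_\H\le\|K(t,u)-K(t',u)\|_{\BB_1(\H)}\|f(u)\|_\H$. The Cauchy–Schwarz inequality in $L^2(\T)$ then gives
$$\|(\A_K f)(t)-(\A_K f)(t')\|_\H\le\left(\int_\T\|K(t,u)-K(t',u)\|_{\BB_1(\H)}^2\,du\right)^{1/2}\|f\|_{L^2(\T,\H)}.$$

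Finally, I would invoke uniform continuity: since $\T\times\T$ is compact, for every $\varepsilon>0$ there is $\delta>0$ with $\|K(s,u)-K(s',u')\|_{\BB_1(\H)}<\varepsilon$ whenever $d((s,u),(s',u'))<\delta$; in particular, if $d(t,t')<\delta$ then $\|K(t,u)-K(t',u)\|_{\BB_1(\H)}<\varepsilon$ for every $u\in\T$ simultaneously, so the integral above is at most $\varepsilon^2|\T|$ and hence $\|(\A_K f)(t)-(\A_K f)(t')\|_\H\le\varepsilon\sqrt{|\T|}\,\|f\|_{L^2(\T,\H)}$. Since this bound does not depend on $t$, $(\A_K f)(\cdot)$ is uniformly continuous. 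The only real subtlety — and the closest thing to an obstacle — is that the modulus of continuity must be made uniform in the free variable $t$, which is exactly why one passes to the joint (hence uniform, by compactness) continuity of $K$ on $\T\times\T$ rather than to separate continuity in each slot; the rest is a routine application of standard Bochner-integral inequalities.
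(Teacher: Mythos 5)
Your proposal is correct and follows essentially the same route as the paper's proof: uniform continuity of $K$ on the compact set $\T\times\T$, followed by the triangle inequality for the Bochner integral and Cauchy--Schwarz to get a bound of the form $\varepsilon\sqrt{|\T|}\,\|f\|_{L^2(\T,\H)}$ independent of $t$. Your version is in fact slightly more careful, since you verify that the Bochner integral is well defined and keep track of the constant $\sqrt{|\T|}$ that the paper's displayed inequality omits.
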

\begin{proof}
By compactness of $\T$ and uniform continuity of $K$, for any given $\epsilon>0$ there exists $\delta>0$ such that $\left\| K\left(u, s\right)-K\left(u, t\right)\right \|_{\BB_1(\H)} <\epsilon$ for all $u, s, t \in \T$ with $\left|s-t\right|<\delta$. Then:
\begin{align*}
    \|(\A_K f)(s) -(\A_K f)(t) \|_{\H} =  
    \|\int_{\T} \big( K\left(s, u\right) - K\left(t, u\right) \big) f(u) du \|_{\H}
    \leq \epsilon\|f\|_{L^2(\T,\H)}. 
\end{align*}

\end{proof}

The following lemma will be instrumental in the proof of our generalisation to Mercer's theorem.

\begin{lemma}\label{lemma : pre-mercers}
\begin{itemize}
        \item[(a)] For any $t \in \T$,
    \begin{equation}\label{eq : dominated sequence}
    \sum_{n=1}^N \lambda_j \Phi_j(t)\otimes_\H\Phi_j(t) \leq K\left(t, t\right)
    \end{equation}

    \item[(b)] Let $\{e_i\}_{i\geq 1}$ be a CONS for $\H$. For any $s,t \in \T$,
    $$
   \left\| \sum_{n=1}^N  \lambda_j \Phi_j(s)\otimes_\H\Phi_j(t) \right\|_{\BB_1(\H)} \leq \|K(s,s)\|_{\BB_1(H)}^{1/2}\cdot \|K(t,t)\|_{\BB_1(\H)}^{1/2}
    $$
    
    \item[(c)] the operator $\sum_{n=1}^\infty \lambda_j \Phi_j(s)\otimes_\H\Phi_j(t)$ is well-defined and uniformly continuous wrt to $\|\cdot\|_{\BB_1(\H)}$. Furthermore, the sum converges uniformly.
    
\end{itemize}
\end{lemma}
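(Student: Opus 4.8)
The plan is to establish the three parts in order: (a) by a truncated-kernel argument, (b) by combining (a) with trace monotonicity, and (c) by a Dini-type argument resting on an everywhere-valid diagonal identity. Throughout I record that the eigenvalues $\lambda_j\geq 0$ (since $\A_K$ is non-negative definite, Lemma~\ref{lemma : basic kernel properties}(iii)), that each $\Phi_j=\lambda_j^{-1}\A_K\Phi_j$ is uniformly continuous by Lemma~\ref{lemma : integral operators maps to continous functions} — so $(\A_K\Phi_j)(t)=\lambda_j\Phi_j(t)$ for \emph{every} $t\in\T$ — that $t\mapsto K(t,t)$ is continuous into $\BB_1(\H)$ with each $K(t,t)$ non-negative self-adjoint, whence $\tr K(t,t)=\|K(t,t)\|_{\BB_1(\H)}$ is continuous and bounded on $\T$, and that $\|a\otimes_\H b\|_{\BB_1(\H)}=\|a\|_\H\|b\|_\H$ with $\otimes_\H$ jointly Lipschitz in its two slots on bounded sets. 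Write $\Pi_N(s,t):=\sum_{j=1}^N\lambda_j\Phi_j(s)\otimes_\H\Phi_j(t)$. For part~(a), let $T_N$ be the (continuous) kernel whose integral operator is $\sum_{j\leq N}\lambda_j\Phi_j\otimes_{L^2(\T,\H)}\Phi_j$; a short computation gives $T_N(s,t)=\sum_{j\leq N}\lambda_j\Phi_j(t)\otimes_\H\Phi_j(s)$, which agrees on the diagonal with the object in~\eqref{eq : dominated sequence}. Put $R_N:=K-T_N$; it is continuous and, by~\eqref{eq : spectral decomp of integral operator}, $\A_{R_N}=\sum_{j>N}\lambda_j\Phi_j\otimes_{L^2(\T,\H)}\Phi_j\geq 0$, so $R_N$ is a non-negative definite kernel by Lemma~\ref{lemma : basic kernel properties}(iii); applying~\eqref{eq : positive kernel} for $R_N$ with $n=1$, a single point $t$ and a single vector $h$ yields $\langle K(t,t)h,h\rangle_\H\geq\langle T_N(t,t)h,h\rangle_\H$, i.e.~\eqref{eq : dominated sequence}. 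Part~(b) then follows: from $0\leq T_N(t,t)\leq K(t,t)$ and monotonicity of the trace, $\sum_{j\leq N}\lambda_j\|\Phi_j(t)\|_\H^2=\tr T_N(t,t)\leq\|K(t,t)\|_{\BB_1(\H)}$, and hence, by the triangle inequality in $\BB_1(\H)$ and Cauchy--Schwarz on the sum,
\begin{align*}
\|\Pi_N(s,t)\|_{\BB_1(\H)}
&\leq \sum_{j\leq N}\lambda_j\|\Phi_j(s)\|_\H\|\Phi_j(t)\|_\H\\
&\leq \Big(\sum_{j\leq N}\lambda_j\|\Phi_j(s)\|_\H^2\Big)^{1/2}\Big(\sum_{j\leq N}\lambda_j\|\Phi_j(t)\|_\H^2\Big)^{1/2}\\
&\leq \|K(s,s)\|_{\BB_1(\H)}^{1/2}\,\|K(t,t)\|_{\BB_1(\H)}^{1/2}.
\end{align*}

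For part~(c), set $g_N(t):=\sum_{j\leq N}\lambda_j\|\Phi_j(t)\|_\H^2$; by~(a)--(b) this is a non-decreasing sequence of continuous functions dominated by $\tr K(t,t)$, so $g_N\uparrow g:=\sum_j\lambda_j\|\Phi_j(\cdot)\|_\H^2<\infty$ pointwise. The tail form of the Cauchy--Schwarz bound in~(b) gives $\|\Pi_N(s,t)-\Pi_M(s,t)\|_{\BB_1(\H)}\leq(g_N(s)-g_M(s))^{1/2}(g_N(t)-g_M(t))^{1/2}$ for $M<N$, so the partial sums are Cauchy in the complete space $\BB_1(\H)$ and $\Pi(s,t):=\sum_j\lambda_j\Phi_j(s)\otimes_\H\Phi_j(t)$ is well-defined. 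The crux is that $g$ be \emph{continuous}, for which it suffices to prove $g(t)=\tr K(t,t)$ at every $t$. Fix $t\in\T$ and $h\in\H$; the function $u\mapsto K(u,t)h$ lies in $L^2(\T,\H)$ and, using symmetry of $K$ with $(\A_K\Phi_j)(t)=\lambda_j\Phi_j(t)$, has $j$-th Fourier coefficient $\langle K(\cdot,t)h,\Phi_j\rangle_{L^2(\T,\H)}=\lambda_j\langle h,\Phi_j(t)\rangle_\H$, so $K(\cdot,t)h=\sum_j\lambda_j\langle h,\Phi_j(t)\rangle_\H\Phi_j$ in $L^2(\T,\H)$; a Cauchy--Schwarz tail estimate, now using $g(u)\leq\sup_{u\in\T}\tr K(u,u)<\infty$, shows this series converges \emph{uniformly in $u$}, hence represents the continuous function $u\mapsto K(u,t)h$ pointwise, and evaluating at $u=t$ gives $K(t,t)h=\sum_j\lambda_j\langle h,\Phi_j(t)\rangle_\H\Phi_j(t)$, i.e.\ $\sum_j\lambda_j\Phi_j(t)\otimes_\H\Phi_j(t)=K(t,t)$ for all $t$ and in particular $g(t)=\tr K(t,t)$, which is continuous. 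Dini's theorem (on compact $\T$, with $g_N\uparrow g$ all continuous) now gives $\|g-g_N\|_\infty\to 0$, and re-entering the tail Cauchy--Schwarz bound yields $\sup_{s,t}\|\Pi(s,t)-\Pi_N(s,t)\|_{\BB_1(\H)}\leq\|g-g_N\|_\infty\to 0$, the asserted uniform convergence. Finally each $\Pi_N$ is uniformly continuous from $\T\times\T$ into $\BB_1(\H)$ (a finite sum of terms built from the uniformly continuous, bounded $\Phi_j$ via the Lipschitz property of $\otimes_\H$), so the uniform limit $\Pi$ is uniformly continuous with respect to $\|\cdot\|_{\BB_1(\H)}$.

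The step I expect to be the real obstacle is proving $g(t)=\tr K(t,t)$ for \emph{every} $t$, not merely almost every $t$: the quick argument via $\int_\T\tr K(t,t)\,dt=\sum_j\lambda_j=\int_\T g(t)\,dt$ delivers only a.e.\ equality, which is insufficient for Dini's theorem, so the pointwise eigenexpansion of $u\mapsto K(u,t)h$ and its uniform convergence — exactly where the domination~\eqref{eq : dominated sequence} from part~(a) is used — are indispensable. A minor but necessary nuisance is matching the ordering of $\otimes_\H$ in the finite-rank kernel $T_N$ to the operator $\sum_{j\leq N}\lambda_j\Phi_j\otimes_{L^2(\T,\H)}\Phi_j$; this is needed only in part~(a) and there only on the diagonal, where the ordering is immaterial.
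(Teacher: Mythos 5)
Your proof is correct. Parts (a) and (b) coincide with the paper's argument: the remainder kernel $K-T_N$ has non-negative integral operator $\sum_{j>N}\lambda_j\Phi_j\otimes_{L^2(\T,\H)}\Phi_j$, hence is a non-negative definite kernel by Lemma~\ref{lemma : basic kernel properties}(iii), giving the diagonal domination; then trace monotonicity plus Cauchy--Schwarz gives (b) (your version, via the triangle inequality $\|\Pi_N(s,t)\|_{\BB_1(\H)}\leq\sum_j\lambda_j\|\Phi_j(s)\|_\H\|\Phi_j(t)\|_\H$, is in fact cleaner than the paper's, which conflates the trace with the trace norm of the non-self-adjoint off-diagonal sum). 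For part (c) you do genuinely more than the paper. The paper's proof simply asserts that ``by (b)'' there is an $N_\varepsilon$ with $\sup_{s,t}\|\sum_{j>N_\varepsilon}\lambda_j\Phi_j(s)\otimes_\H\Phi_j(t)\|_{\BB_1(\H)}\leq\varepsilon$; but (b) only bounds the \emph{full} partial sums by $\|K(s,s)\|_{\BB_1(\H)}^{1/2}\|K(t,t)\|_{\BB_1(\H)}^{1/2}$ and by itself says nothing about uniform decay of the tails. The ingredients you supply --- the everywhere-valid (not merely a.e.) diagonal identity $K(t,t)=\sum_j\lambda_j\Phi_j(t)\otimes_\H\Phi_j(t)$, obtained from the uniformly convergent eigenexpansion of $u\mapsto K(u,t)h$ using the continuous versions $\Phi_j=\lambda_j^{-1}\A_K\Phi_j$, followed by Dini's theorem for $g_N\uparrow\tr K(\cdot,\cdot)$ on the compact set $\T$ and the tail Cauchy--Schwarz estimate --- are exactly the classical completion of Mercer's argument, and they patch a real gap in the paper's own write-up of (c). Your closing observation that the integrated identity $\int_\T\tr K(t,t)\,dt=\sum_j\lambda_j$ yields only a.e.\ equality, which is insufficient for Dini, is precisely the point. (One pedantic footnote: eigenfunctions in $\ker\A_K$ contribute zero Fourier coefficients to $K(\cdot,t)h$, so completeness of the full eigenbasis indeed gives the expansion you use.)
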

\begin{proof}
\textit{(a)} 
Let
$$
K_n(s, t)=K(s, t)-\sum_{j=1}^n \lambda_j \Phi_j(s) \otimes_{\H} \Phi_j(t)
$$
and take $\A_{K_n}$ to be the integral operator with kernel $K_n$. Note that $K_n$ is continuous, by continuity of $K$ and by Lemma \ref{lemma : integral operators maps to continous functions}. For any $f\in L^2(\T,\H)$:
$$
\left\langle \A_{K_n} f, f\right\rangle_{L^2(\T,\H)}=\langle\A_{K} f, f\rangle_{L^2(\T,\H)}-\sum_{j=1}^n \lambda_j\left\langle f, \Phi_j\right\rangle_{L^2(\T,\H)}^2=\sum_{j=n+1}^{\infty} \lambda_j\left\langle f, \Phi_j\right\rangle_{L^2(\T,\H)}^2 \geq 0
$$
and $\A_{K_n}$ must be non-negative definite. This implies, by Lemma \ref{lemma : basic kernel properties} (iii), that $K_n$ is non-negative definite and hence $K_n(t, t) \geq 0$, thereby proving \eqref{eq : dominated sequence}.

\medskip

\textit{(b)} Let $\{e_i\}_{i\geq 1}$ be a CONS for $\H$. First, note that:
$$
\|K(t,t) \|_{\BB_1(\H)}  \geq \| \sum_{j=1}^N \lambda_j \Phi_j(t)\otimes \Phi_j(t) \|_{\BB_1(\H)} = \sum_{i\geq 1}\sum_{j=1}^N \lambda_j \langle \Phi_j(t), e_i\rangle_{\H} ^2.
$$
Then, by Cauchy-Schwarz, we obtain that:
\begin{align*}
    \left\|  \sum_{n=1}^N \lambda_j \Phi_j(s)\otimes_\H\Phi_j(t) \right\|_{\BB_1(\H)} 
    \: &=\: \sum_{i\geq 1}\sum_{j=1}^N 
    \lambda_j \langle \Phi_j(t), e_i\rangle_{\H}\langle \Phi_j(s), e_i\rangle_{\H}\\
    \: &\leq \: \sum_{j=1}^N\left(\sum_{i\geq 1} \lambda_j \langle \Phi_j(t), e_i\rangle_{\H} ^2 \right)^{1/2} \cdot \left(\sum_{i\geq 1} \lambda_j \langle  \Phi_j(s), e_i\rangle_{\H} ^2 \right)^{1/2} \\
    \: &\leq \: \|K(t,t) \|_{\BB_1(\H)}^{1/2} \|K(s,s) \|_{\BB_1(\H)}^{1/2}.
\end{align*}

\medskip

\textit{(c)} Fix $\varepsilon>0$. By \textit{(b)} we may conclude that there exists $N_{\varepsilon}$ such that:
$$
\sup_{s,t\in\T}\left\| \sum_{n> N_{\varepsilon}} \lambda_j \Phi_j(s)\otimes_\H\Phi_j(t)\right\|_{\BB_1(\H)} \leq \varepsilon.
$$
Furthermore, for any $N$, uniform continuity of the $\Phi_j(\cdot)$ entails the existence of $\delta>0$ such that:
$$
\left\| \sum_{n\leq N} \lambda_j \Phi_j(s)\otimes_\H\Phi_j(t)
 - \sum_{n\leq N} \lambda_j \Phi_j(s')\otimes_\H\Phi_j(t') \right\|_{\BB_1(\H)} \leq \varepsilon.
$$
whenever $|s-s'| + |t-t'| < \delta$. By the last two displays, it is then clear that there exists $\delta$ such that:
$$
\left\| \sum_{n\geq 1} \lambda_j \Phi_j(s)\otimes_\H\Phi_j(t)
 - \sum_{n\geq 1} \lambda_j \Phi_j(s')\otimes_\H\Phi_j(t') \right\|_{\BB_1(\H)} \leq \varepsilon.
$$
 whenever $|s-s'| + |t-t'| < \delta$.
\end{proof}

\begin{theorem}[Mercer's Theorem for Operator-Valued Kernels]
\label{thm : mercers, ours}
Let $K$ be a continuous, symmetric and non-negative definite kernel, and denote by $\A_K$ the corresponding integral operator. Let $\left\{\lambda_j, \Phi_j\right\}_{j\geq 1}$ be the eigenvalue and eigenfunction pairs of $\A_K$, where $\lambda_j\in\R$ and $\Phi_j\in L^2(\T,\H)$. Then:
$$
K(s, t)=\sum_{j=1}^{\infty} \lambda_j \Phi_j(s)\otimes_{\H} \Phi_j(t),
$$
for all $s, t$, with the series converging absolutely and uniformly.
\end{theorem}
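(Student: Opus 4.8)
The plan is to follow the classical Mercer argument, now lifted to the operator-valued setting, exploiting the three preparatory facts assembled in Lemma \ref{lemma : pre-mercers}. Define the residual kernel
$$
R_N(s,t) := K(s,t) - \sum_{j=1}^N \lambda_j \Phi_j(s)\otimes_\H \Phi_j(t),
$$
which by the computation in the proof of Lemma \ref{lemma : pre-mercers}(a) is a continuous, symmetric, non-negative definite kernel; in particular $R_N(t,t)\geq 0$ for every $t$. By Lemma \ref{lemma : pre-mercers}(c) the series $\sum_{j\geq 1}\lambda_j\Phi_j(s)\otimes_\H\Phi_j(t)$ converges in $\BB_1(\H)$, uniformly in $(s,t)$, to a continuous limit; call it $\widetilde K(s,t)$. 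It remains to show $\widetilde K = K$. The key intermediate claim is that $R_N(t,t)\to 0$ in $\BB_1(\H)$, uniformly in $t$; once this is in hand, the off-diagonal convergence follows from the Cauchy–Schwarz-type bound of Lemma \ref{lemma : pre-mercers}(b) applied to the residual, namely $\|R_N(s,t)\|_{\BB_1(\H)}\leq \|R_N(s,s)\|_{\BB_1(\H)}^{1/2}\|R_N(t,t)\|_{\BB_1(\H)}^{1/2}$, which forces $R_N(s,t)\to 0$ uniformly on $\T\times\T$, and the absolute convergence likewise follows since the partial sums are monotone on the diagonal.

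To prove the diagonal convergence, first I would argue pointwise. Fix $t$ and set $g_N(t):=\tr\big(R_N(t,t)\big) = \tr K(t,t) - \sum_{j=1}^N \lambda_j\|\Phi_j(t)\|_\H^2 \geq 0$; this is a non-increasing sequence of nonnegative continuous functions of $t$, hence converges pointwise to some $g(t)\geq 0$. Integrating over $\T$ and using $\int_\T \|\Phi_j(t)\|_\H^2\,dt = \|\Phi_j\|_{L^2(\T,\H)}^2 = 1$ together with $\sum_j \lambda_j = \tr\A_K = \int_\T \tr K(t,t)\,dt$ (a Mercer-for-trace / monotone convergence identity I would need to justify, e.g. via the $L^2(\T,\H)$-expansion of $\A_K$ and the fact that $\A_K$ is trace-class with the stated kernel), we get $\int_\T g_N(t)\,dt \to 0$. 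Combined with $g_N\downarrow g$ and Dini-type/monotone convergence reasoning, this yields $g\equiv 0$, i.e.\ $\tr R_N(t,t)\to 0$ pointwise. Since $R_N(t,t)$ is a nonnegative operator, $\|R_N(t,t)\|_{\BB_1(\H)} = \tr R_N(t,t)$, so we have pointwise convergence to $0$ in trace norm. Finally, Dini's theorem upgrades this to uniform convergence: $\{g_N\}$ is a monotone sequence of continuous functions on the compact set $\T$ converging pointwise to the continuous function $0$, hence it converges uniformly.

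Two points deserve care, and I expect the trace identity $\sum_j \lambda_j = \int_\T \tr K(t,t)\,dt$ to be the main obstacle. One must verify that $\A_K$, as defined by the Bochner integral against the continuous kernel $K$, is genuinely trace-class with trace equal to $\int_\T \tr K(t,t)\,dt$; the natural route is to expand $K(t,\cdot)$ and $\A_K$ in a product CONS of $L^2(\T,\H)$ (built from a CONS $\{e_i\}$ of $\H$ and a CONS $\{\psi_m\}$ of $L^2(\T,\R)$), interchange sums and integrals by non-negativity (Tonelli) and the bound $\tr K(t,t)\leq \sup_t \|K(t,t)\|_{\BB_1(\H)}<\infty$ coming from continuity and compactness of $\T$. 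The second point is purely bookkeeping: the bound in Lemma \ref{lemma : pre-mercers}(b) was stated for the partial sums $\sum_{j=1}^N$, but the identical Cauchy–Schwarz argument applies verbatim to the residual kernel $R_N$ using that $R_N$ is itself continuous, symmetric and non-negative definite, so $R_N(t,t)\geq 0$ and the operator $R_N(s,t)$ admits the same diagonal-domination estimate. With these in place, $\widetilde K(s,t) = K(s,t) - \lim_N R_N(s,t) = K(s,t)$ uniformly and absolutely, completing the proof.
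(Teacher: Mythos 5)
Your argument is the classical Mercer proof (residual kernel, positivity on the diagonal, Dini, then Cauchy--Schwarz off the diagonal), which is a genuinely different route from the paper's. The paper instead argues that a continuous operator-valued kernel is uniquely determined by its integral operator, observes that the integral operator with kernel $\sum_j \lambda_j \Phi_j(s)\otimes_\H\Phi_j(t)$ has the same eigendecomposition as $\A_K$ and is therefore equal to it, and concludes $K=\widetilde K$ directly, importing absolute and uniform convergence from Lemma \ref{lemma : pre-mercers}. The paper's route buys brevity and avoids any trace computation; yours, if completed, is more self-contained on the convergence side and makes the mechanism of uniform convergence (Dini on the diagonal) explicit rather than delegating it to Lemma \ref{lemma : pre-mercers}(c), whose own proof of the tail bound is arguably the least transparent step in the paper.

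The gap you flag is real and is the crux: the identity $\sum_j\lambda_j=\int_\T \tr K(t,t)\,dt$ cannot be obtained by the Tonelli interchange you sketch. Writing $\tr\A_K=\sum_{m,i}\langle \A_K(\psi_m e_i),\psi_m e_i\rangle = \sum_{m,i}\int\int \langle K(t,u)e_i,e_i\rangle\,\psi_m(t)\psi_m(u)\,dt\,du$, the individual terms are nonnegative but the \emph{integrands} $\langle K(t,u)e_i,e_i\rangle\psi_m(t)\psi_m(u)$ are not, and $\sum_m\psi_m(t)\psi_m(u)$ does not converge to anything integrable, so Tonelli does not let you collapse the sum over $m$ onto the diagonal. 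The standard ways to close this are: (i) apply the \emph{scalar} Mercer theorem to each continuous nonnegative-definite coordinate kernel $k_i(s,t):=\langle K(s,t)e_i,e_i\rangle$ to get $\sum_m\langle\A_{k_i}\psi_m,\psi_m\rangle=\int_\T k_i(t,t)\,dt$, then sum over $i$ by Tonelli (legitimate there, since both sides are nonnegative); or (ii) a partition/averaging argument: for a partition $\{E_i\}$ of small mesh, $\sum_i|E_i|^{-1}\langle\A_K\mathds{1}_{E_i}h,\mathds{1}_{E_i}h\rangle$-type Riemann sums are dominated by $\tr\A_K$ and converge to $\int_\T\tr K(t,t)\,dt$ by continuity, giving the reverse of the inequality $\sum_j\lambda_j\le\int_\T\tr K(t,t)\,dt$ that you already have from Lemma \ref{lemma : pre-mercers}(a). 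Note that the paper itself only proves this trace identity \emph{after} (and using) Mercer's theorem, so you must be careful not to cite it circularly. One further small point: from $\int_\T g\,dt=0$ you only get $g=0$ a.e.; to conclude $g\equiv 0$ (needed for Dini) you should note that $g$ is continuous, which follows because $\widetilde K$ is a uniform limit of continuous functions by Lemma \ref{lemma : pre-mercers}(c) and $|\tr A-\tr B|\le\|A-B\|_{\BB_1(\H)}$; a decreasing limit of continuous functions is merely upper semicontinuous, which would not suffice on its own.
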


\begin{proof}

 For different continuous kernels $K_1(s, t)$ and $K_2(s, t)$, it is straightforward to construct a function $f$ such that $\int_\T K_2(s, t) f(s) ds$ and $\int_\T K_1(s, t) f(s) ds$ differ. Thus, $K$ is the unique operator kernel that defines $\A_{K}$. Now, the integral operator with the continuous kernel $\sum_{j=1}^{\infty} \lambda_j \Phi_j(s) \otimes \Phi_j(t)$ has the same eigen-decomposition as $\A_K$ and is therefore the same operator. Thus, $K(s, t)=\sum_{j=1}^{\infty} \lambda_j \Phi_j(s) \otimes \Phi_j(t)$ for all $s, t\in \T$ with the right-hand side converging absolutely and uniformly as a consequence of Lemma \ref{lemma : pre-mercers}.

\end{proof}

\noindent Finally, we can now see that the integral operator $\A_K$ in \eqref{eq : integral operator} is trace class. 
\begin{proposition}
Let the continuous kernel $K$ be symmetric and non-negative definite and $\A_K$ the corresponding integral operator. Then:
$$
\|\A_{K}\|_{\BB_1(L^2(\T,\H))}=\int_\T \|K(s, s)\|_{\BB_1(\H))} ds.
$$
\end{proposition}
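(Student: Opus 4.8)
The plan is to show that both sides of the claimed identity are equal to $\sum_{j\geq 1}\lambda_j$, the sum of the eigenvalues of $\A_K$, interpreted a priori as an element of $[0,\infty]$. For the left-hand side: by Lemma~\ref{lemma : basic kernel properties} the operator $\A_K$ is compact, self-adjoint and non-negative, so $\sqrt{\A_K^*\A_K}=\A_K$ and, evaluating the trace in the CONS $\{\Phi_j\}_{j\geq 1}$ of eigenfunctions,
\[
\|\A_K\|_{\BB_1(L^2(\T,\H))}=\tr\big(\sqrt{\A_K^*\A_K}\big)=\sum_{j\geq 1}\langle \A_K\Phi_j,\Phi_j\rangle_{L^2(\T,\H)}=\sum_{j\geq 1}\lambda_j .
\]

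For the right-hand side I would start from Theorem~\ref{thm : mercers, ours}, which on the diagonal reads $K(s,s)=\sum_{j\geq 1}\lambda_j\,\Phi_j(s)\otimes_\H\Phi_j(s)$, the series converging in $\BB_1(\H)$ and uniformly in $s$ by Lemma~\ref{lemma : pre-mercers}(c). Each partial sum $\sum_{j\leq N}\lambda_j\,\Phi_j(s)\otimes_\H\Phi_j(s)$ is a non-negative finite-rank operator, hence its $\BB_1(\H)$-norm equals its trace, namely $\sum_{j\leq N}\lambda_j\|\Phi_j(s)\|_\H^2$. Since the $\BB_1(\H)$-norm is continuous and the limiting operator $K(s,s)$ is itself non-negative (from non-negative definiteness of $K$ with $n=1$), letting $N\to\infty$ gives, for every $s\in\T$,
\[
\|K(s,s)\|_{\BB_1(\H)}=\sum_{j\geq 1}\lambda_j\|\Phi_j(s)\|_\H^2 .
\]

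It then remains to integrate this pointwise identity over $\T$ and exchange the sum with the integral. As the summands $\lambda_j\|\Phi_j(s)\|_\H^2$ are non-negative, the partial sums increase to $\|K(s,s)\|_{\BB_1(\H)}$, so the monotone convergence theorem yields
\[
\int_\T\|K(s,s)\|_{\BB_1(\H)}\,ds=\sum_{j\geq 1}\lambda_j\int_\T\|\Phi_j(s)\|_\H^2\,ds=\sum_{j\geq 1}\lambda_j\|\Phi_j\|_{L^2(\T,\H)}^2=\sum_{j\geq 1}\lambda_j ,
\]
the last equality because $\{\Phi_j\}_{j\geq 1}$ is orthonormal in $L^2(\T,\H)$. (Alternatively one may justify the interchange via the uniform convergence in Lemma~\ref{lemma : pre-mercers}(c) together with $|\T|<\infty$.) Comparing with the first display gives the proposition, and incidentally shows $\A_K$ is trace class since $s\mapsto\|K(s,s)\|_{\BB_1(\H)}$ is continuous on the compact set $\T$ and hence integrable. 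The only step that requires a moment's care is the term-by-term passage from operator-norm convergence to convergence of traces on the diagonal; but this is immediate here precisely because every operator involved is non-negative, for which $\BB_1(\H)$-convergence is exactly convergence of traces.
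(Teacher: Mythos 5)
Your proof is correct and follows essentially the same route as the paper's: both identify $\|\A_K\|_{\BB_1(L^2(\T,\H))}$ with $\sum_{j\geq 1}\lambda_j$ and then convert this to $\int_\T \tr\{K(s,s)\}\,ds$ via Parseval/Mercer on the diagonal and an interchange of sum and integral. You are somewhat more explicit than the paper about the justifications (trace norm equals trace for non-negative operators, monotone convergence for the interchange, continuity of the $\BB_1$-norm along the Mercer partial sums), but the argument is the same.
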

\begin{proof}
We see that, for any CONS $\{e_i\}_{i\geq 1}$ of $\H$:
\begin{align*}
    \tr\{\A_K\} \:=&\: \sum_{j\geq 1} \lambda_j
     \:=\: \sum_{j\geq 1} \lambda_j  \| \Phi_j\|_{L^2(\T,\H)}
     \:=\: \sum_{j\geq 1} \lambda_j    \int_{\T} \| \Phi_j(u)\|^2_{\H}du  
     \:=\:    \int_{\T}  \sum_{j\geq 1} \lambda_j \| \Phi_j(u)\|^2_{\H}du  \\
     \:=&\:    \int_{\T}  \sum_{i,j\geq 1} \lambda_j \langle  \Phi_j(u), e_i \rangle_{\H} du  
     \:=\:    \int_{\T} \tr\left\{  \sum_{j\geq 1} \lambda_j  \Phi_j(u) \otimes_{\H} \Phi_j(u) \right\} du  \\
     \:=&\:    \int_{\T} \tr\{  K(u,u) \}du 
\end{align*}
where we have employed Parseval's equality, and have exchanged of the order of summation and integration by Fubini's Theorem.
\end{proof}

 \section{Karhunen–Loève Theorem for Hilbertian Flows}
Let $\{ \chi(t)\;:\; t\in\T\}$ be a stochastic process on a probability space $(\Omega,\mathscr{F},\PP)$, taking values in $\H$.
We may define its mean  $m\;:\; \T \longrightarrow \H$ by:
\begin{equation}\label{eq : mean}
     m(t) = \E[\chi(t)],  \qquad t\in\T
\end{equation}
and its {covariance kernel} $K\;:\; \T\times\T \rightarrow \BB(\H)$ by:
\begin{equation}\label{eq : cov}
    K(s,t) =\E [ \chi(s)\otimes_{\H} \chi(t) ],  \qquad t,s\in \T
\end{equation}
provided the above expectations are well defined as Bochner integrals (see \citet[][Definition 7.2.1]{hsing2015theoretical}.

We say that a process $\{ \chi(t)\;:\; t\in\T\}$ is a \textit{second-order process} if \eqref{eq : mean} and \eqref{eq : cov} are well defined for every $t\in\T$. Note that the covariance kernel is a symmetric operator-valued kernel, i.e.:
$$ 
K(s,t)= K(t,s)^*
$$

 We say that $\chi(\cdot)$ is \textit{mean-square continuous} if:
    $$
    \lim_{n\rightarrow\infty}  \E \left[\| \chi(t_n) - \chi(t)\|^2_{\H}\right]  \rightarrow 0 
    $$
for any $t\in\T$ and any sequence $\{t_n\}_{n\geq 1}$ converging to $t$.

\medskip

\begin{lemma}
Let $\{ \chi(t)\;:\; t\in\T\}$ be a second-order process. Then $\chi$ is mean-square continuous if and only if its mean and covariance functions are continuous wrt $\|\cdot\|_\H$ and $\|\cdot\|_{\BB_1(\H)}$, respectively.
\end{lemma}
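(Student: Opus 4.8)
The plan is to prove the two implications separately, in each case reducing the $\H$-valued statement to scalar-type estimates via the polarisation/decomposition identities already available in the excerpt. Throughout I will use the elementary bound $\|u\otimes_\H v\|_{\BB_1(\H)} = \|u\|_\H\|v\|_\H$ and the fact that $K(s,t) = \E[\chi(s)\otimes_\H\chi(t)]$ is a Bochner integral, so that $\|K(s,t)\|_{\BB_1(\H)} \le \E\|\chi(s)\|_\H\|\chi(t)\|_\H \le (\E\|\chi(s)\|_\H^2)^{1/2}(\E\|\chi(t)\|_\H^2)^{1/2}$ by Cauchy--Schwarz; note also that $\tr K(t,t) = \E\|\chi(t)\|_\H^2$, which will be the link between the trace norm of the kernel and the second moment of the process.

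First I would prove the ``only if'' direction. Assume $\chi$ is mean-square continuous. Continuity of $m$ is immediate from Jensen/Cauchy--Schwarz: $\|m(t_n) - m(t)\|_\H = \|\E[\chi(t_n) - \chi(t)]\|_\H \le (\E\|\chi(t_n)-\chi(t)\|_\H^2)^{1/2} \to 0$. For the covariance, write, for $t_n \to t$ and $s_n \to s$,
\begin{align*}
K(s_n,t_n) - K(s,t) &= \E\big[(\chi(s_n) - \chi(s))\otimes_\H \chi(t_n)\big] + \E\big[\chi(s)\otimes_\H(\chi(t_n) - \chi(t))\big],
\end{align*}
and bound the $\BB_1(\H)$-norm of each Bochner integral by the expectation of the integrand's trace norm, i.e.\ $\E\big[\|\chi(s_n)-\chi(s)\|_\H\,\|\chi(t_n)\|_\H\big] + \E\big[\|\chi(s)\|_\H\,\|\chi(t_n)-\chi(t)\|_\H\big]$. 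Applying Cauchy--Schwarz to each term and using that mean-square continuity forces $\sup_n \E\|\chi(t_n)\|_\H^2 < \infty$ (the sequence $\E\|\chi(t_n)\|_\H^2$ converges to $\E\|\chi(t)\|_\H^2$), both terms vanish as $n\to\infty$; hence $K$ is continuous with respect to $\|\cdot\|_{\BB_1(\H)}$.

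For the ``if'' direction, assume $m$ is $\|\cdot\|_\H$-continuous and $K$ is $\|\cdot\|_{\BB_1(\H)}$-continuous. The key identity is the expansion
$$
\E\|\chi(t_n) - \chi(t)\|_\H^2 = \tr K(t_n,t_n) - 2\,\tr K(t_n,t) + \tr K(t,t) - \|m(t_n) - m(t)\|_\H^2 + \text{(mean cross terms)},
$$
which one derives by writing $\chi(t_n) - \chi(t) = (\chi(t_n) - m(t_n)) - (\chi(t) - m(t)) + (m(t_n) - m(t))$, expanding the squared norm, and recognising $\E\langle \chi(s)-m(s),\chi(t)-m(t)\rangle_\H = \tr\big(K(s,t) - m(s)\otimes_\H m(t)\big)$. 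Continuity of $m$ makes the mean contributions vanish, and since $|\tr A| \le \|A\|_{\BB_1(\H)}$, continuity of $K$ gives $\tr K(t_n,t_n) \to \tr K(t,t)$ and $\tr K(t_n,t) \to \tr K(t,t)$; the three trace terms therefore cancel in the limit, yielding $\E\|\chi(t_n)-\chi(t)\|_\H^2 \to 0$.

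The main obstacle is purely bookkeeping: one must be careful that all the Bochner integrals and traces are genuinely well defined and finite at each stage — in particular that $\tr K(s,t)$ makes sense for the off-diagonal kernel (it does, since $K(s,t)$ is trace-class by hypothesis) and that the interchange of $\E$ with $\langle\cdot,\cdot\rangle_\H$ and with $\tr$ is justified (it is, by the defining properties of the Bochner integral and dominated convergence, using the uniform second-moment bound). Once these measurability/integrability checks are in place, both directions follow from the Cauchy--Schwarz and triangle inequalities as above.
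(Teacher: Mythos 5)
Your proposal is correct, and it is actually more complete than the paper's own argument. The paper centers the process, writes the single polarisation identity
$\E\|\chi(s)-\chi(t)\|_{\H}^2=\|K(s,s)+K(t,t)-K(s,t)-K(t,s)\|_{\BB_1(\H)}$
(using that the expectation of a non-negative rank-one valued integrand has trace norm equal to its trace), and reads off the ``covariance continuous $\Rightarrow$ mean-square continuous'' implication from it; it does not explicitly treat the mean function, nor the converse implication that mean-square continuity forces continuity of $K$ at \emph{off-diagonal} points $(s,t)$, which does not follow from that identity alone. You supply exactly that missing half via the telescoping decomposition $K(s_n,t_n)-K(s,t)=\E[(\chi(s_n)-\chi(s))\otimes_\H\chi(t_n)]+\E[\chi(s)\otimes_\H(\chi(t_n)-\chi(t))]$, the bound $\|u\otimes_\H v\|_{\BB_1(\H)}=\|u\|_\H\|v\|_\H$, Cauchy--Schwarz, and the uniform second-moment bound along the sequence --- all of which are sound. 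Your ``if'' direction, passing through traces and $|\tr A|\le\|A\|_{\BB_1(\H)}$, is essentially equivalent to the paper's polarisation identity (note that with the paper's uncentered definition $K(s,t)=\E[\chi(s)\otimes_\H\chi(t)]$ one has $\tr K(s,t)=\E\langle\chi(s),\chi(t)\rangle_\H$ directly, so your mean cross terms can be dispensed with entirely). In short: same core mechanism on the diagonal, but your version handles both implications and the mean function, which the published proof leaves implicit.
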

\begin{proof}
Assume without loss of generality that the process is centered, i.e.\ that $m = 0$.
\begin{align*}
     E[ \|\chi(s)-\chi(t)\|_{\H}^2] \quad
     =& \E \| (\chi(s) - \chi(t))\otimes ( \chi(s) - \chi(t))\|_{\BB_1(\H)} \\
     = & \quad \| \E [ (\chi(s) - \chi(t))\otimes ( \chi(s) - \chi(t))] \|_{\BB_1(\H)} \\
     = & \quad \| \E [ (\chi(s) - \chi(t))\otimes \chi(s) - (\chi(s) - \chi(t))\otimes \chi(t))] \|_{\BB_1(\H)} \\
      = & \quad \| \E [ \chi(s) \otimes \chi(s)  - \chi(t)\otimes \chi(s) 
      - \chi(s)\otimes \chi(t) + \chi(t))\otimes \chi(t)] \|_{\BB_1(\H)} \\ 
      = & \quad  \| K(t,t) + K(s,s) - K(t,s) - K(t,s)\|_{\BB_1(\H)} \rightarrow 0, \quad \text{as } s\rightarrow t.
\end{align*}

Note that the kernel $K$ is non-negative definite, in the sense of \eqref{eq : positive kernel}. Indeed, for $n\geq 1$ and any sequences $\{v_j\}_{j=1,\dots,n}\subset \T$, $\{f_j\}_{j=1,\dots,n} \subset \H$:
\begin{align*}
\sum_{i,j=1}^n \langle K(v_i,v_j) f_i, f_j\rangle_{\H} 
\:  = \: \E \left[ \sum_{i,j=1}^n \langle \chi_{v_i}, f_i\rangle_{\H} \cdot \langle \chi_{v_j}, f_j\rangle_{\H} \right]
\:  = \:  \left( \sum_{j=1}^n \E  \langle \chi_{v_j}, f_j\rangle_{\H}\right)^2 \geq 0
\end{align*}
\end{proof}

\noindent We may finally state:

\begin{theorem}[Karhunen-Loève Expansion for Hilbertian Flows/Fields]
\label{kl-ours}
Let $\{ \chi(t)\;:\; t\in\T\}$ be a mean-squared continuous second-order process. Define:
$$
\chi_n(t) := \sum_{j=1}^n 
\langle\chi, \Phi_j\rangle_{L^2(\T,\H)}
\Phi_j(t),\qquad t\in \T.
$$
Then:
$$
\lim_{n\rightarrow\infty}\sup_{t\in \T}
\mathbb{E}\left[\| \chi_n(t) - \chi(t) \|_{\H}^2\right] = 0.
$$
\end{theorem}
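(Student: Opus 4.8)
The plan is to mimic the classical Karhunen--Loève argument, using Theorem~\ref{thm : mercers, ours} applied to the covariance kernel $K(s,t)=\E[\chi(s)\otimes_\H\chi(t)]$ in place of Mercer's scalar theorem. First I would note that the lemma preceding the statement guarantees $K$ is continuous (in $\BB_1(\H)$), symmetric, and non-negative definite, so $\A_K$ is a compact, self-adjoint, non-negative integral operator with spectral decomposition $\A_K=\sum_{j\geq1}\lambda_j\Phi_j\otimes_{L^2(\T,\H)}\Phi_j$, and the $\Phi_j(\cdot)$ are uniformly continuous by Lemma~\ref{lemma : integral operators maps to continous functions}. The crucial identification is that this $\A_K$ is exactly the covariance operator $\CC$ of $\chi$ viewed as an element of $L^2(\T,\H)$: indeed $(\CC f)(t)=\E[\langle\chi,f\rangle_{L^2(\T,\H)}\,\chi(t)]=\int_\T\E[\chi(t)\otimes_\H\chi(u)]f(u)\,du=(\A_K f)(t)$, the interchange of expectation and integral being justified by the finite second moment and Fubini for Bochner integrals. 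Hence the $\{\Phi_j\}$ in the statement are precisely the eigenfunctions of $\CC$, and the scores $\xi_j:=\langle\chi,\Phi_j\rangle_{L^2(\T,\H)}$ satisfy $\E[\xi_j\xi_k]=\langle\CC\Phi_j,\Phi_k\rangle=\lambda_j\delta_{jk}$.

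Next I would write, for each fixed $t\in\T$ and $n<m$,
\begin{equation*}
\E\Bigl[\bigl\|\textstyle\sum_{j=n+1}^m\xi_j\Phi_j(t)\bigr\|_\H^2\Bigr]
=\sum_{j=n+1}^m\E[\xi_j^2]\,\|\Phi_j(t)\|_\H^2
=\sum_{j=n+1}^m\lambda_j\,\|\Phi_j(t)\|_\H^2
=\tr\Bigl(\textstyle\sum_{j=n+1}^m\lambda_j\,\Phi_j(t)\otimes_\H\Phi_j(t)\Bigr),
\end{equation*}
using the uncorrelatedness of the scores. By Lemma~\ref{lemma : pre-mercers}(c) the series $\sum_j\lambda_j\Phi_j(t)\otimes_\H\Phi_j(t)$ converges uniformly in $\BB_1(\H)$, so the tail sums above tend to $0$ \emph{uniformly in $t\in\T$}; this shows $\chi_n(t)$ is Cauchy in $L^2(\Omega,\H)$ uniformly in $t$ and hence converges to some limit $\tilde\chi(t)$ with $\sup_t\E\|\chi_n(t)-\tilde\chi(t)\|_\H^2\to0$. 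It remains to identify $\tilde\chi(t)=\chi(t)$ for every $t$. For this I would compute, for fixed $t$,
\begin{equation*}
\E\bigl[\|\chi(t)-\chi_n(t)\|_\H^2\bigr]
=\E\|\chi(t)\|_\H^2-2\sum_{j=1}^n\E[\xi_j\langle\chi(t),\Phi_j(t)\rangle_\H]+\sum_{j=1}^n\lambda_j\|\Phi_j(t)\|_\H^2,
\end{equation*}
and observe that $\E[\xi_j\langle\chi(t),\Phi_j(t)\rangle_\H]=\langle\E[\chi(t)\otimes_\H\chi]\,\cdot,\Phi_j\rangle$-type manipulations give $\E[\xi_j\,\chi(t)]=(\A_K\Phi_j)(t)=\lambda_j\Phi_j(t)$ in $\H$, so the cross term equals $\lambda_j\|\Phi_j(t)\|_\H^2$ and the whole expression collapses to $\tr K(t,t)-\sum_{j=1}^n\lambda_j\|\Phi_j(t)\|_\H^2$. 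By Theorem~\ref{thm : mercers, ours}, $\sum_{j\geq1}\lambda_j\Phi_j(t)\otimes_\H\Phi_j(t)=K(t,t)$, and taking traces (legitimate since the convergence is in $\BB_1(\H)$) yields $\sum_{j\geq1}\lambda_j\|\Phi_j(t)\|_\H^2=\tr K(t,t)$, so $\E\|\chi(t)-\chi_n(t)\|_\H^2\to0$ pointwise; combined with the uniform Cauchy property this forces $\tilde\chi=\chi$ and
\begin{equation*}
\sup_{t\in\T}\E\bigl[\|\chi(t)-\chi_n(t)\|_\H^2\bigr]
=\sup_{t\in\T}\tr\Bigl(\textstyle\sum_{j>n}\lambda_j\,\Phi_j(t)\otimes_\H\Phi_j(t)\Bigr)\longrightarrow0
\end{equation*}
by Lemma~\ref{lemma : pre-mercers}(c).

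The main obstacle I anticipate is the rigorous justification of the Bochner-integral interchanges — most importantly the identity $\E[\xi_j\,\chi(t)]=\lambda_j\Phi_j(t)$ and the identification $\CC=\A_K$ — since these require Fubini-type theorems for $\H$-valued (and $\BB_1(\H)$-valued) Bochner integrals together with the square-integrability hypothesis $\E\|\chi\|_{L^2(\T,\H)}^2<\infty$; once these are in place, the uniformity is essentially handed to us by Lemma~\ref{lemma : pre-mercers}(c), which is really the analytic heart of the argument (and itself the analogue of Dini's theorem in the classical proof).
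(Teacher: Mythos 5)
Your proposal is correct and follows essentially the same route as the paper: both reduce $\E\|\chi_n(t)-\chi(t)\|_\H^2$ to the exact identity $\tr K(t,t)-\sum_{j=1}^n\lambda_j\|\Phi_j(t)\|_\H^2$ and then invoke the uniform convergence of the Mercer series (Lemma \ref{lemma : pre-mercers}(c) / Theorem \ref{thm : mercers, ours}) to get uniformity in $t$. The only cosmetic differences are that you handle the cross term via the eigen-equation $\E[\xi_j\chi(t)]=(\A_K\Phi_j)(t)=\lambda_j\Phi_j(t)$ rather than integrating the Mercer expansion of $K(u,t)$ term by term, and you add a (redundant but harmless) uniform-Cauchy step before identifying the limit.
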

\begin{proof}

First, note that:
\begin{align*}
    \E  \| \, \chi_n(t) \, \|_{\H }^2 
    \:&=\:     \sum_{i,j = 1}^n \langle \Phi_j(t), \Phi_i(t) \rangle_\H 
    \; \E \left [ \langle\chi, \Phi_j\rangle_{L^2(\T,\H)} \cdot
    \langle\chi, \Phi_i\rangle_{L^2(\T,\H)}  \right] \\
    \:&=\: \sum_{i,j = 1}^n \langle \Phi_j(t), \Phi_i(t) \rangle_\H 
    \; \langle \A_K \Phi_j,\Phi_i\rangle _{L^2(\T,\H)} \\
    \:&=\: \sum_{i,j = 1}^n \langle \Phi_j(t), \Phi_i(t) \rangle_\H 
    \; \lambda_j \delta_{i,j} 
\:=\: \sum_{j = 1}^n   \lambda_j  \|  \Phi_j(t)\|^2_{\H} 
\end{align*}
Furthermore:
\begin{align*}
    \E  \left [ \langle  \chi_n(t), \chi(t)\rangle _{\H } \right]
    \:&=\:   \sum_{j=1 }^n  \E  \left [ \langle\chi, \Phi_j\rangle_{L^2(\T,\H)} \cdot  \langle  \Phi_j(t), \chi(t)\rangle _{\H } \right] \\
    \:&=\:   \sum_{j=1 }^n \int_{\T} \E  \left [ \langle\chi(u), \Phi_j(u)\rangle_{\H} \cdot  \langle  \Phi_j(t), \chi(t)\rangle _{\H } \right] du\\
     \:&=\:   \sum_{j=1 }^n \int_{\T} \E \langle \: (\chi(u)\otimes_{\H} \chi(t))\Phi_j(u), \Phi_j(t) \: \rangle_{\H} \\
     \:&=\:   \sum_{j=1 }^n \int_{\T} \langle \: K(u,t) \Phi_j(u), \Phi_j(t) \: \rangle_{\H} \\
     \:&=\:   \sum_{i=1}^{\infty}  \sum_{j=1 }^n \int_{\T} \langle \: 
     \lambda_i \Phi_i(u)\otimes_{\H} \Phi_i(t)
     \Phi_j(u), \Phi_j(t) \: \rangle_{\H} du\\
    \:&=\:   \sum_{i=1}^{\infty}  \sum_{j=1 }^n \int_{\T} \langle \: 
     \lambda_i \langle \Phi_i(u), 
     \Phi_j(u) \rangle_{\H} \cdot \langle \Phi_i(t), \Phi_j(t)  \rangle_{\H} du\\
    \:&=\:   \sum_{i=1}^{\infty}  \sum_{j=1 }^n  \langle \: 
     \lambda_i \langle \Phi_i, 
     \Phi_j \rangle_{L^2(\T,\H)} \cdot \langle \Phi_i(t), \Phi_j(t)  \rangle_{\H} 
     \:=\:    \sum_{j=1 }^n  
     \lambda_j  \|  \Phi_j(t)\|^2_{\H} 
\end{align*}
where we have employed Fubini's theorem to interchange integral and expectation, and our extension to Mercer's decomposition, Theorem \ref{thm : mercers, ours} to express $K(u,t)$ as  $\sum_{i=1}^{\infty}  \lambda_i \langle \Phi_i(u) \otimes_{\H} \Phi_i(t) \rangle_{\H}$.
     
Putting things together, we obtain:
\begin{equation}
\label{eq KL final}
    \E \left[ \| \chi_n(t) - \chi \|_{\H }^2\right] 
    \:=\:  \E \left[ \| \chi_n(t)\|^2  + \| \chi \|_{\H }^2 - 2 \langle \chi_n, \chi \rangle_{\H}\right] 
    \:=\:  \tr\big\{ K(t,t)\big\} - \sum_{j=1 }^n  
   \lambda_j  \|  \Phi_j(t)\|^2_{\H} 
\end{equation}
and observing that, by Parseval's identity:
\begin{align*}
\tr\left\{ \sum_{j=1}^{n}  \lambda_j \langle \Phi_j(t) \otimes_{\H} \Phi_j(t) \rangle_{\H} \right\}
 \:=\:   \sum_{j=1}^{n} \sum_{i\geq 1}   \lambda_j \langle \Phi_j(t), e_i \rangle_{\H}^2
 \:=\: \sum_{j=1}^{n}    \lambda_j  \|  \Phi_j(t) \|^2_{\H}
\end{align*}
 we finally see that \eqref{eq KL final} converges to zero uniformly by our extension to Mercer's decomposition, Theorem \ref{thm : mercers, ours}, thus proving the theorem's statement.
\end{proof}

\section{Remarks on Computation}

We now briefly remark on the issue of computing the expansion components for a collection of realised flows. 

Let $\chi^{1}, \dots, \chi^{N}$ be independent realizations of $\chi$. Let $\T_{n} = \{t_{k}\}_{k=1}^{n}$ be a discretization of $\T$. Finally, let $\H_{m}$ be the $m$-dimensional subspace  of $\H$ spanned by the first $m$ vectors of some CONS $\{e_{k}\}_{k=1}^{\infty}$ of $\H$. Assume that at each of the nodes $\{t_k\}_{k=1}^{n}$, we can observe linear measurements $\langle \chi^j(t_k),e_i\rangle$ for each flow $\{\chi_j\}_{j=1}^N$ and each basis element $\{e_i\}_{i=1}^{m}$.

Under this measurement scheme, the realization $\chi^{j}$, $1\leq j \leq N$, is reduced to the column 
$$\mathbf{X}_{j} = [ \langle \chi^{j}(t_1), e_{1} \rangle, \langle \chi^{j}(t_{1}), e_{2} \rangle, \dots, \langle \chi^{j}(t_1), e_{m} \rangle, \langle \chi^{j}(t_2), e_{1} \rangle, \dots, \langle \chi^{j}(t_n), e_{m} \rangle ]^{\top}$$
The complete collection of observations can be concisely summarized by the $mn \times N$ matrix $\mathbf{X} = [\mathbf{X}_{1} ~ \mathbf{X}_{2} ~ \cdots ~ \mathbf{X}_{N}]$. A naive approach to computing the eigendecomposition $\{(\lambda_{j}, \Phi_{j})\}_{j=1}^{\infty}$ associated with $\chi$ would proceed by evaluating a discretized version of the empirical covariance and then computing the eigendecomposition of the resulting matrix. In our case, this corresponds to calculating the eigendecompostion of $\mathbf{K} = \tfrac{1}{N}\mathbf{X}\mathbf{X}^{\top}$. The computational complexity of this operation is $O(m^{3}n^{3} + m^{2}n^{2}N)$ ($O(m^3n^3)$ for the eigendecomposition and the rest for evaluating the covariance), which severely limits the resolution ($n$ and $m$).

Fortunately, by a classical trick \citep{chambers1977, Joiliffe1986}, we can circumvent this computational cost by simply calculating instead the singular value decomposition of $\mathbf{X}$. In practice, $mn > N$ and this allows us to write the SVD of $\mathbf{X} = \mathbf{U}\mathbf{D}\mathbf{V}$ where $\mathbf{U} = [u_{ij}]$ and $\mathbf{V}$ are orthogonal matrices of dimensions $mn \times N$ and $N \times mn$ and $\mathbf{D} = [d_{ij}\delta_{ij}]$ is an $N \times N$ diagonal matrix. For $k = 1, \dots, n$, we have the approximations
$$\hat{\Phi}_{j}(t_{k}) = \sqrt{n}\sum_{l=k+1}^{k+m} u_{lj} e_{l} \quad \mbox{ and } \quad \hat{\lambda}_{j} = d_{jj}^{2}/N$$
Compared to the complexity $O(m^{3}n^{3} + m^{2}n^{2}N)$ of the naive approach, this approximation's complexity is $O(\max(mn, N)\min(mn, N)) = O(mnN^{2})$. 

\hfill

\section{Discussion}

  \cite{kim2020principal} have also considered Hilbertian functional data, and we conclude by discussing the differences between the two contexts.   \cite{kim2020principal} obtain an optimal decomposition by optimizing over representations of the form
\begin{eqnarray*}
    \chi(t) = \sum_{j=1}^{\infty} \alpha_{j}(t) \Phi_{j}
\end{eqnarray*}
where $\{\alpha_{j}\} \subset L^{2}(\R)$ is the CONS to be optimized over and $\{\Phi_{j}\}_{j=1}^{\infty} \subset \H$ are random ``coefficients'' in the ambient Hilbert space. In this respect, it is quite different from the original Karhunen-Lo\`{e}ve expansion, where the role of coefficients and CONS is the ``reverse'': traditionally, the CONS lives in the same ambient space as the process and the coefficients are real-valued. The expansion of   \cite{kim2020principal} is arrived at via the eigendecomposition of the \textit{real-valued} autocovariance kernel $C(s,t) = \E \langle\chi(s),\chi(t)\rangle_{\H}$, for $s,t\in\T$. Consequently, their analysis produces eigenfunctions residing in $L^2(\T,\R)$, regardless of the nature of $\H$. 

In contrast, our decomposition is obtained by optimizing over representations of the form
\begin{eqnarray*}
    \chi(t) = \sum_{j=1}^{\infty} \alpha_{j} \Phi_{j}(t)
\end{eqnarray*}
where $\{\Phi_{j}\}_{j=1}^{\infty} \subset L^{2}(\T, \H)$ is the CONS to be optimized over and $\{\alpha_{j}\} \subset \R$ are the random real-valued coefficients. This stems from the spectral analysis of the \textit{operator-valued} kernel $K(s,t)=\E \chi(s)\otimes\chi(t)$, and consequently produces principal components living in the ambient space $L^2(\T,\H)$. These are arguably more natural, but in any sense compatible with the traditional Karhunen-Lo\`eve theorem.

\newpage

\bibliographystyle{chicago}
\bibliography{bib}

\end{document}